\newcommand{\xqed}[1]{%
  \leavevmode\unskip\penalty9999 \hbox{}\nobreak\hfill
  \quad\hbox{\ensuremath{#1}}}
\def\Sym{\mathop{\rm Sym}\nolimits}
\def\Ens{\mathop{\rm Ens}\nolimits}
\def\Spec{\mathop{\bf Spec}\nolimits}
\def\rank{\mathop{\rm rank}\nolimits}
\let\phi\varphi
\let\epsilon\varepsilon
\let\setminus\smallsetminus
\newtheorem{thm}[equation]{Theorem}
\newtheorem{cor}[equation]{Corollary}
\newtheorem{lem}[equation]{Lemma}
\newtheorem{prop}[equation]{Proposition}
\theoremstyle{definition}
\newtheorem{rem}[equation]{Remark}
\newtheorem{cons}[equation]{Construction}
\newtheorem{notation}[equation]{Notations}
\newcounter{example}
\renewcommand{\theexample}{\arabic{example}}
\numberwithin{equation}{section}
\newcommand{\BG}{{\mathbb{G}}}
\newcommand{\BM}{{\mathbb{M}}}
\newcommand{\BP}{{\mathbb{P}}}
\newcommand{\BQ}{{\mathbb{Q}}}
\newcommand{\BZ}{{\mathbb{Z}}}
\newcommand{\Fp}{{\mathfrak{p}}}
\newcommand{\FA}{{\mathfrak{A}}}
\newcommand{\FS}{{\mathfrak{S}}}
\newcommand{\FT}{{\mathfrak{T}}}
\newcommand{\CC}{{\cal C}}
\newcommand{\CE}{{\cal E}}
\newcommand{\CF}{{\cal F}}
\newcommand{\CG}{{\cal G}}
\newcommand{\CH}{{\cal H}}
\newcommand{\CO}{{\cal O}}
\newcommand{\CV}{{\cal V}}
\newcommand{\ep}{{ \bigwedge}}
\def\into{\hookrightarrow}
\newbox\mybox
\def\arrover#1{\mathrel{
       \setbox\mybox=\hbox spread 1em
              {\hfil$\scriptstyle#1\vphantom{g}$\hfil}
       \vbox{\offinterlineskip\copy\mybox
             \hbox to\wd\mybox{\rightarrowfill}}}}
\def\ontoover#1{\mathrel{
       \setbox\mybox=\hbox spread 1.4em{\hfil$\scriptstyle#1$\hfil}
       \vbox{\offinterlineskip\copy\mybox
             \hbox to\wd\mybox{\rightarrowfill\hskip-2.8mm
                               $\rightarrow$}}}}
\def\to{\rightarrow}
\title{On Certain Morphisms between Flag Varieties}
\author{S. Mohammad Hadi Hedayatzadeh}
\begin{document}
\date{}

\maketitle
\abstract{The aim of this paper is to construct certain closed embeddings of Grassmannian varieties, using tensor operations on vector bundles. These embeddings generalize Segre and Pl\"ucker morphisms.}
\tableofcontents

\parindent0pt


\section{Introduction}

Grassmannians or more generally flag varieties, which are natural generalizations of projective spaces, play a significant role in algebraic geometry. Let $F$ be a field, and $1\leq d\leq n$ be natural numbers. The Grassmannian variety $\BG r(n,d)$ is the space of all $d$-dimensional subspaces of $F^{n}$. In a more canonical way, if $X$ is a scheme and $\CV$ is a vector bundle on $X$, then $\BG r(\CV,d)$ is the scheme that represents the functor
\begin{align*}
\mathfrak{Sch}_{X}&\to \Ens
\end{align*}
sending $S$ to the set of isomorphism classes of all short exact sentences of locally free $\CO_{S}$-modules
\[0\to\CF\to\CV_{S}\to\CG\to0\] where $\CF$ is of rank $d$.\\

The existence or non-existence of morphisms of Grassmannian varieties have been extensively studied (see \cite{MR1016272,MR1274499,MR2520917}). For example, the Segre embedding is the following morphism between projective spaces:
\begin{align*}
\BP^{m}\times\BP^{n}&\to\BP^{(m+1)(n+1)-1}\\
\big([x_{0},\dots,x_{m}],[y_{0},\dots,y_{n}]\big)&\mapsto [x_{i}y_{j}]_{0\leq i\leq m,0\leq j\leq n}
\end{align*}

and the Pl\"ucker morphism is the following morphism between Grassmannian varieties:
\begin{align*}
\BG r(n,d)&\to\BP^{\binom{n}{d}-1}\\
W&\mapsto \ep^{d}W
\end{align*}

In this paper, we use tensor products, symmetric and alternating powers of vector bundle on schemes to define natural morphisms between Grassmannian varieties. More precisely, we construct the following morphisms:

\begin{align*}
\FT:\BG r(\CV_{1},m_{1})\times_{X}\dots\times_{X}\BG r(\CV_{r},m_{r})&\to \BG r(\CV_{1}\otimes_{\CO_{X}}\dots\otimes_{\CO_{X}}\CV_{r},m_{1}\cdots m_{r})\\
\FT_{r}:\BG r(\CV,m)&\to \BG r\big(\CV^{\otimes r},m^{r}\big)\\
\FA_{r}:\BG r(\CV,m)&\to \BG r\Big(\ep^{r}_{\CO_{X}}\CV,\binom{m}{r}\Big)\\
\FS_{r}:\BG r(\CV,m)&\to \BG r\Big(\Sym^{r}_{\CO_{X}}\CV,\binom{m+r-1}{r}\Big)
\end{align*}

where $\CV_{1},\dots,\CV_{r}$ and $\CV$ are finite locally free sheaves on a scheme $X$. The main result of the paper is then the following theorem:\\

\textbf{Theorem \ref{MainThm}}. Morphisms $\FT, \FT_{r}$ and $\FA_{r}$ are closed immersions. Morphism $\FS_{r}$ is a closed immersion, if $m$ is at least $2$ or $r$ is invertible on $X$.\\

We should also mention that it is possible to generalize these results to more general flag varieties. These general results can be obtained by induction from what we already have. There are many ways of constructing flags from a given flag and using various tensor constructions. Therefore, even the statement of a general result would be very complex and so, we will forgo any such attempts.\\

The motivation for this theorem, which can be viewed as a generalization of Segre and Pl\"ucker embeddings, actually comes from our work on period morphisms of Rapoport-Zink spaces (cf. \cite{H7, HE}), where this embedding plays a role in understanding certain cycles in the cohomology of Rapoport-Zink towers coming from the Lubin-Tate tower (using the exterior powers of $p$-divisible groups defined in \cite{H2}).\\

This has also potential applications in intersection theory, Schubert calculus, arithmetic height functions and operadic calculus.\\

Let us now say a few words about the structure of the paper and the proof of the main theorem. We first start by developing some multilinear commutative algebra (section 2). In section 3, we ``translate'' the results of section 2 to the language of schemes and then glue these local results to obtain global multilinear algebra results on vector bundles over schemes. We then use these results, together with some techniques from algebraic geometry to show the main theorem.

\section{Some Multilinear Algebra}

Let $X$ be a scheme and $\CF$ a locally free $\CO_{X}$-module of  rank $n$. For $1\leq r\leq r$, we denote by $\ep^{r}\CF$ the sheaf $r^{\rm{th}}$-exterior power of $\CF$. It's a locally free $\CO_{X}$-module of rank $\binom{n}{r}$.

\begin{prop}
Let $R$ be a ring and $M$ and for $i=1,\dots,r$,  $M_{i}$ be elements of $\BM_{n}(R)$. Then, for every $1\leq d\leq n$ we have the following identities:
\begin{enumerate}[(1)]
\item $\det(M_{1}\otimes\dots\otimes M_{r})=\prod_{i=1}^{r}\big(\det(M_{i})^{\frac{\prod_{j=1}^{r} n_{j}}{n_{i}}}\big)$
\item $\det(\Sym^{d}M)=\det(M)^{\binom{n+d-1}{d-1}}$
\item $\det(\ep^{d}M)=\det(M)^{\binom{n-1}{d-1}}$
\end{enumerate}

\end{prop}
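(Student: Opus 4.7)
The plan is to observe that each of the three equalities is a polynomial identity with integer coefficients in the entries of the matrices involved (both sides are polynomials of known degree in those entries). Hence it suffices to verify them universally, which by base change reduces to checking them over an algebraic closure of the fraction field of a polynomial ring in the matrix entries over $\BZ$. Over an algebraically closed field every square matrix is conjugate to an upper triangular one, and the operations $M\mapsto M_1\otimes\cdots\otimes M_r$, $M\mapsto\Sym^d M$, $M\mapsto\ep^d M$ all commute with conjugation, so one may further assume each matrix is already upper triangular with the eigenvalues on the diagonal. In that setting the determinant is the product of the diagonal entries, and everything reduces to bookkeeping of eigenvalues.

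For part (1), take $M_i\in\BM_{n_i}(R)$ upper triangular with eigenvalues $\lambda_{i,1},\dots,\lambda_{i,n_i}$. In the lex-ordered tensor basis $e_{1,k_1}\otimes\cdots\otimes e_{r,k_r}$, the operator $M_1\otimes\cdots\otimes M_r$ is upper triangular with diagonal entries $\lambda_{1,k_1}\cdots\lambda_{r,k_r}$; collecting factors by index $i$ gives
\[
\det(M_1\otimes\cdots\otimes M_r)=\prod_{k_1,\dots,k_r}\lambda_{1,k_1}\cdots\lambda_{r,k_r}=\prod_{i=1}^r\Bigl(\prod_{k_i=1}^{n_i}\lambda_{i,k_i}\Bigr)^{\prod_{j\neq i}n_j}=\prod_{i=1}^r\det(M_i)^{\prod_{j\neq i}n_j}.
\]

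For (2), order the monomial basis $\{e^{\alpha}=e_1^{\alpha_1}\cdots e_n^{\alpha_n}:|\alpha|=d\}$ of $\Sym^d(R^n)$ lexicographically; then $\Sym^d M$ is again upper triangular with diagonal entries $\lambda^{\alpha}=\lambda_1^{\alpha_1}\cdots\lambda_n^{\alpha_n}$, so $\det(\Sym^d M)=\prod_i\lambda_i^{c_i}$ with $c_i=\sum_{|\alpha|=d}\alpha_i$. By the permutation symmetry of the exponent set $\{\alpha:|\alpha|=d\}$ the $c_i$ are all equal, and since $\sum_i c_i=d\binom{n+d-1}{d}$, one gets $c_i=\tfrac{d}{n}\binom{n+d-1}{d}=\binom{n+d-1}{d-1}$, which is the claim. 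For (3), the basis $\{e_{i_1}\wedge\cdots\wedge e_{i_d}:i_1<\cdots<i_d\}$ of $\ep^d(R^n)$ similarly diagonalizes $\ep^d M$ with eigenvalue $\lambda_{i_1}\cdots\lambda_{i_d}$; each index $i\in\{1,\dots,n\}$ appears in exactly $\binom{n-1}{d-1}$ subsets of size $d$, so $\det(\ep^d M)=\prod_i\lambda_i^{\binom{n-1}{d-1}}=\det(M)^{\binom{n-1}{d-1}}$.

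There is no substantive obstacle here: the arguments are textbook, and the only points requiring care are the preliminary check that both sides really are integer polynomials in the entries (so the universal reduction is legal for an arbitrary commutative ring $R$ without invertibility hypotheses), and the elementary counting identity $\tfrac{d}{n}\binom{n+d-1}{d}=\binom{n+d-1}{d-1}$ used in (2).
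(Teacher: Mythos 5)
Your proof is correct, and all three identities check out, including the counting identity $\tfrac{d}{n}\binom{n+d-1}{d}=\binom{n+d-1}{d-1}$ used in part (2). The overall skeleton coincides with the paper's: both arguments note that each identity is a polynomial identity with integer coefficients in the matrix entries, pass to the universal matrices over $\BZ[X_{ijk}]$, and embed that integral domain into an algebraically closed field. Where you genuinely differ is in the step over the algebraically closed field: the paper verifies the identity on diagonal matrices, extends it to diagonalizable ones, and then invokes Zariski density together with continuity of the difference map $\Delta_R$ to conclude; you instead conjugate each matrix into upper triangular form and read off the determinant as the product of diagonal entries. Your route is arguably cleaner: triangularization applies to \emph{every} matrix over an algebraically closed field, so no density argument is needed, and it sidesteps a small imprecision in the paper (the set of diagonal matrices is a proper closed subset, hence not dense; what is dense is the set of diagonalizable matrices, e.g.\ those with distinct eigenvalues). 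The price is that you must justify that $M_1\otimes\cdots\otimes M_r$, $\Sym^d M$ and $\bigwedge^d M$ are again upper triangular in a suitable ordering of the induced basis --- immediate for the tensor product in lexicographic order, and standard but worth a sentence for the symmetric and exterior powers. You also carry out the eigenvalue bookkeeping for all three parts explicitly, whereas the paper proves only (1) and declares (2) and (3) similar.
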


\begin{proof}
We will show the first identity; the other two can be proven similarly and therefore will be omitted.\\

If all $M_{i}$ are diagonal matrices, then a straightforward calculation shows the desired identity. This shows that the result also holds for diagonalizable matrices (this follows from the equality $\ep^{d}(A\cdot B)=\ep^{d}A\cdot\ep^{d}B$). Consider the map:

\[\Delta_{R}:\BM_{n_{1}}(R)\times\dots\times\BM_{n_{r}}(R)\to R\]
\[(M_{1},\dots,M_{r})\mapsto \det(M_{1}\otimes\dots\otimes M_{r})-\prod_{i=1}^{r}\big(\det(M_{i})^{\frac{\prod_{j=1}^{r} n_{j}}{n_{i}}}\big)\]
This is a continuous morphism with respect to the Zariski topology on both sides. Let us first assume that $R$ is an algebraically closed field. Then, the subset of $\BM_{n_{1}}(R)\times\dots\times\BM_{n_{r}}(R)$ consisting of diagonal matrices is dense. As the map is the constant zero map on diagonal matrices, it follows that it is identically zero on the whole space and we are done.\\

Now let $R$ be arbitrary. Write $M_{i}=(a_{ijk})_{j,k}$ and define the ring homomorphism:
\begin{align*}
\Theta:\BZ[X_{ijk};i=1,\dots,r]\to&\, R\\
X_{ijk}\mapsto&\, m_{ijk}
\end{align*}
Set $N_{i}:=(X_{ijk})_{j,k}\in \BM_{n_{i}}(\BZ[X_{ijk}])$. Under $\Theta$ these matrices map to $M_{i}$. The following diagram commutes:
\[\xymatrix{\BM_{n_{1}}(\BZ[X_{ijk}])\times\dots\times\BM_{n_{r}}(\BZ[X_{ijk}])\ar[rr]^{\Theta}\ar[d]_{\Delta_{\BZ[X_{ijk}]}}&&\BM_{n_{1}}(R)\times\dots\times\BM_{n_{r}}(R)\ar[d]^{\Delta_{R}}\\\BZ[X_{ijk}]\ar[rr]_{\Theta}&&R}\]Therefore, it is enough to show the identity (i) for $N_{i}$. This identity holds in $\BZ[X_{ijk}]$ if and only if it holds in an algebraic closure of the fraction field $\BQ(X_{ijk})$. But as we saw above, the identity holds in every algebraically closed field. This achieves the proof.
\end{proof}

\begin{cor}
\label{CorTenInj}
Let $R$ be a ring and $\phi_{i}:P_{i}\to Q_{i}$ ($i=1,\dots,r$) be $R$-linear homomorphisms between finitely generated projective $R$-modules of rank $n_{i}$. Then, the tensor product $\phi_{1}\otimes\dots\otimes\phi_{r}$ is injective (respectively surjective, respectively  an isomorphism) if and only if for all $i=1,\dots,r$, $\phi_{i}$ is injective (respectively surjective, respectively an isomorphism).
\end{cor}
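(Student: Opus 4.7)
The plan is to reduce to the case of square matrices over a ring and then apply the first identity of the preceding proposition to translate invertibility (resp.\ non-zerodivisibility) of $\det(\phi_1\otimes\cdots\otimes\phi_r)$ into the same property for the individual $\det(\phi_i)$.

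First I would pass to the free case. Each of the three properties --- injectivity, surjectivity, being an isomorphism --- of a homomorphism of finitely generated $R$-modules can be checked after localizing at every prime $\Fp$ of $R$. Over the local ring $R_{\Fp}$ a finitely generated projective module is free, so each $(P_i)_{\Fp}$ and $(Q_i)_{\Fp}$ becomes free of rank $n_i$. After choosing bases, $(\phi_i)_{\Fp}$ is represented by a matrix $M_i\in \BM_{n_i}(R_{\Fp})$, and since $\otimes$ commutes with localization, $(\phi_1\otimes\cdots\otimes\phi_r)_{\Fp}$ is represented by $M_1\otimes\cdots\otimes M_r$. It therefore suffices to treat square matrices over an arbitrary commutative ring.

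Next I would translate each of the three properties into a condition on the determinant. A matrix $M\in\BM_n(R)$ is an isomorphism iff $\det(M)\in R^\times$; it is surjective iff it is an isomorphism (any surjection between free modules of equal rank splits, forcing $\det(M)$ to be a unit --- or equivalently, by Vasconcelos's theorem, a surjective endomorphism of a finitely generated module is automatically injective); and it is injective iff $\det(M)$ is a non-zerodivisor, which is McCoy's theorem. Combining this dictionary with the identity
\[\det(M_1\otimes\cdots\otimes M_r)=\prod_{i=1}^{r}\det(M_i)^{\prod_{j\ne i}n_j}\]
and the elementary observations that a product in $R$ is a unit (resp.\ a non-zerodivisor) iff each factor is, and that $a^k$ is a unit (resp.\ a non-zerodivisor) iff $a$ is for any integer $k\ge 1$, all three equivalences follow at once. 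There is no real obstacle here beyond invoking McCoy's theorem for the injectivity case; all the substantive content is already carried by the preceding proposition.
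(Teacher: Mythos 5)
Your argument is correct and takes essentially the same route as the paper's own proof: reduce by localization to free modules over a local ring, translate injectivity (resp.\ surjectivity, resp.\ isomorphism) of a square matrix into $\det$ being a non-zerodivisor (resp.\ a unit, with surjectivity collapsing to isomorphism by Nakayama), and conclude from the determinant identity $\det(M_1\otimes\cdots\otimes M_r)=\prod_i\det(M_i)^{\prod_{j\ne i}n_j}$ of the preceding proposition. The only difference is that you spell out the supporting facts (McCoy's theorem, the behaviour of units and non-zerodivisors under products and powers) that the paper leaves implicit.
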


\begin{proof}
Before we begin the proof, let us emphasize that, by Nakayama's lemma, a morphism between two finitely generated projective modules of the same rank is surjective if and only if it is an isomorphism and so, we included both in the statement just for aesthetics!\\

Note that the formation of tensor products commutes with base change and a homomorphism is injective (respectively an isomorphism) if and only if it is so after localization to each prime ideal. So, we can assume that $R$ is a local ring and therefore all finitely generated projective modules are free. So, we can assign a matrix $M_{i}\in\BM_{n_{i}}(R)$ to each $\phi_{i}$. Now, $\phi_{i}$ is injective (respectively an isomorphism) if and only if $\det(M_{i})$ is a non-zero-divisor (respectively a unit). The statement of the corollary now follows from the proposition.
\end{proof}

\begin{cor}
\label{CorAltInj}
Let $R$ be a ring and $\phi:P\to Q$ an $R$-linear homomorphism between two finitely generated projective $R$-modules of rank $n$. Then the following statements are equivalent:
\begin{itemize}
\item[(i)]
$\phi$ is injective (respectively surjective, respectively an isomorphism)
\item[(ii)]
$\ep^{d}\phi$ is injective (respectively surjective, respectively an isomorphism) for some $1\leq d\leq n$.
\item[(iii)]
$\ep^{d}\phi$ is injective (respectively surjective, respectively an isomorphism) for all $1\leq d\leq n$.
\end{itemize}
\end{cor}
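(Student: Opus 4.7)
The plan is to imitate the proof of Corollary \ref{CorTenInj}, using part (3) of the proposition instead of part (1). Since injectivity and the property of being an isomorphism can be checked locally at each prime ideal, and both taking exterior powers and localization commute, I would begin by localizing $R$ at a prime. Then $P$ and $Q$ become free of rank $n$, and $\phi$ is represented by a matrix $M \in \BM_{n}(R)$; correspondingly $\ep^{d}\phi$ is represented by $\ep^{d}M$, which lies in $\BM_{\binom{n}{d}}(R)$, itself a square matrix over a local ring.

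The key observation is then the following: for $P'$ and $Q'$ finitely generated free $R$-modules of the same rank over a local ring, a homomorphism $P' \to Q'$ with matrix $N$ is injective iff $\det(N)$ is a non-zero-divisor, and is an isomorphism (equivalently, surjective) iff $\det(N)$ is a unit. Applying this to both $\phi$ and $\ep^{d}\phi$, and invoking part (3) of the proposition,
\[
\det(\ep^{d}M)=\det(M)^{\binom{n-1}{d-1}},
\]
reduces everything to the following elementary claim about the exponent $e:=\binom{n-1}{d-1}$: for $1\leq d\leq n$ we have $e\geq 1$, and for $a\in R$ and $e\geq 1$, the element $a$ is a non-zero-divisor (respectively a unit) if and only if $a^{e}$ is.

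With this in hand, the equivalence (i)$\Leftrightarrow$(iii) follows at once, since the condition on $\det(M)$ is independent of the $d$ used. The implication (iii)$\Rightarrow$(ii) is trivial, and (ii)$\Rightarrow$(i) follows again from the same computation: the existence of one $1\leq d\leq n$ for which $\det(M)^{\binom{n-1}{d-1}}$ is a non-zero-divisor (resp. a unit) forces $\det(M)$ to have the same property, which gives injectivity (resp. isomorphism) of $\phi$.

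There is no real obstacle here; the only mildly delicate point is the elementary fact that $a\in R$ is a non-zero-divisor iff $a^{e}$ is, for $e\geq 1$ and $R$ local (or in fact any commutative ring): the forward direction is clear, and for the converse, if $ab=0$ then $a^{e}b=0$, whence $b=0$. The unit case is immediate from the fact that the group of units is closed under taking roots when they exist in $R$, or simply by noting that $a$ divides $a^{e}$ and is divided by $a^{e-1}\cdot a^{-(e-1)}$ once $a^{e}$ is invertible. This completes the argument.
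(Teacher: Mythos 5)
Your argument is correct and is essentially the paper's own proof: the paper disposes of this corollary by saying it is ``similar to the proof of the previous corollary,'' and that previous proof is exactly your reduction to a local ring, the criterion that a square matrix over a local ring gives an injective (resp.\ bijective) map iff its determinant is a non-zero-divisor (resp.\ a unit), and the identity $\det(\ep^{d}M)=\det(M)^{\binom{n-1}{d-1}}$ from part (3) of the proposition. Your explicit handling of the exponent $e=\binom{n-1}{d-1}\geq 1$ and of the equivalence ``$a$ is a non-zero-divisor (resp.\ unit) iff $a^{e}$ is'' fills in precisely the elementary details the paper leaves implicit.
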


\begin{proof}
The proof is similar to the proof of the previous corollary.
\end{proof}

\begin{cor}
\label{CorSymInj}
Let $R$ be a ring and $\phi:P\to Q$ an $R$-linear homomorphism between two finitely generated projective $R$-modules of the same rank. Then the following statements are equivalent:
\begin{itemize}
\item[(i)]
$\phi$ is injective (respectively surjective, respectively an isomorphism)
\item[(ii)]
$\Sym^{d}\phi$ is injective (respectively surjective, respectively an isomorphism) for some $d\geq 1$.
\item[(iii)]
$\Sym^{d}\phi$ is injective (respectively surjective, respectively an isomorphism) for all $d\geq 1$.
\end{itemize}
\end{cor}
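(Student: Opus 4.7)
My plan is to mirror the argument used for Corollary \ref{CorAltInj}. First, since the formation of symmetric powers commutes with base change, and since injectivity, surjectivity and being an isomorphism can all be checked after localization at each prime, I would reduce to the case where $R$ is a local ring. In that setting, $P$ and $Q$ are free of the same rank $n$, so $\phi$ is described by a square matrix $M \in \BM_{n}(R)$, and correspondingly $\Sym^{d}\phi$ is represented by the matrix $\Sym^{d}M$.

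Next, I would invoke part (2) of the proposition, which gives the determinant formula
\[
\det(\Sym^{d}M) \;=\; \det(M)^{\binom{n+d-1}{d-1}}.
\]
The exponent $\binom{n+d-1}{d-1}$ is a positive integer whenever $n \geq 1$ and $d \geq 1$ (the case $n = 0$ being trivial since then $P = Q = 0$). Over a commutative ring, a square matrix is injective as an endomorphism of the ambient free module iff its determinant is a non-zero-divisor, and is an isomorphism iff its determinant is a unit; moreover, a surjection between two free modules of the same finite rank is automatically an isomorphism (so the ``surjective'' case reduces to the ``isomorphism'' case). Since for any positive integer $k$, an element $a \in R$ is a non-zero-divisor (respectively a unit) iff $a^{k}$ is a non-zero-divisor (respectively a unit), the equivalence of (i), (ii), (iii) falls out directly from the displayed identity, applied both for the specific $d$ in (ii) and uniformly for all $d$ in (iii).

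There is no genuine obstacle here: the only point requiring a moment's care is observing that the exponent $\binom{n+d-1}{d-1}$ is genuinely positive, which is what ensures that the translation between ``$\det(M)$ is a unit/non-zero-divisor'' and ``$\det(\Sym^{d}M)$ is a unit/non-zero-divisor'' goes in \emph{both} directions. No multilinear algebra input beyond the proposition is needed, and the whole argument is entirely parallel to the alternating case.
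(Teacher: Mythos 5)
Your proposal is correct and follows essentially the same route as the paper: the paper's proof of Corollary \ref{CorSymInj} simply refers back to the argument for Corollary \ref{CorTenInj}, namely localizing to reduce to free modules over a local ring and then applying the determinant identity $\det(\Sym^{d}M)=\det(M)^{\binom{n+d-1}{d-1}}$ together with the fact that $a$ is a non-zero-divisor (respectively a unit) iff $a^{k}$ is, for $k>0$. Your explicit remark that the exponent is positive is exactly the point that makes the implication reversible, so nothing is missing.
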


\begin{proof}
The proof is similar to the proof of the Corollary \ref{CorTenInj}.
\end{proof}

\begin{lem}
\label{LemTensInj}
Let $R$ be a ring. Consider (for $i=1,\dots,r$) the following short exact sequences of finitely generated projective $R$-modules:
\[0\to V_{i}\arrover{\phi_{i}} W_{i}\to U_{i}\to 0\]
\[0\to V'_{i}\arrover{\phi_{i}'} W_{i}\to U'_{i}\to 0\]where  $\rank(V_{i})=\rank (V_{i}')$. Assume that the image of $\phi_{1}\otimes\dots\otimes\phi_{r}$ and the image of $\phi_{1}'\otimes\dots\otimes\phi_{r}'$ are equal as sub-$R$-modules of $W_{1}\otimes\dots\otimes W_{r}$. Then, for each $i$, the image of $\phi_{i}$ is the same as the image of $\phi_{i}'$ as sub-$R$-modules of $W_{i}$.
\end{lem}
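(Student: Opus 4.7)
The plan is to reduce to the case of a local ring $R$, where the short exact sequences split and every module in sight is free, and then exploit ``contractions'' $W_1\otimes\cdots\otimes W_r\to W_i$ built from linear functionals on the other factors to recover each $\image\phi'_i$ inside $\image\phi_i$ (and vice versa).

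First I would reduce to $R$ local. Both the hypothesis and the conclusion concern equality of submodules, which can be checked after localizing at every prime; localization commutes with the relevant tensor products and preserves projectivity, exactness and ranks. Under this reduction, each of $V_i, W_i, U_i, V'_i, U'_i$ is free, the short exact sequences split (their cokernels being projective, hence free over a local ring), and each of $\image\phi_i,\image\phi'_i$ is a direct summand of $W_i$ of the common rank $\rank V_i=\rank V'_i$. The degenerate case $\rank V_k=0$ forces $V_k=V'_k=0$, so I may assume all ranks are at least $1$.

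For the main step, fix an index $i$. For every $j\neq i$, use that $\image\phi'_j$ is a direct summand of $W_j$ of positive rank to pick $v'_j\in V'_j$ whose image $\phi'_j(v'_j)$ is part of a basis of $W_j$, and an $R$-linear functional $f_j\colon W_j\to R$ with $f_j(\phi'_j(v'_j))=1$. Form the contraction
\[
F:=f_1\otimes\cdots\otimes f_{i-1}\otimes\id_{W_i}\otimes f_{i+1}\otimes\cdots\otimes f_r\colon W_1\otimes\cdots\otimes W_r\to W_i.
\]
On any pure tensor $\phi_1(v_1)\otimes\cdots\otimes\phi_r(v_r)$ the map $F$ outputs a scalar multiple of $\phi_i(v_i)\in\image\phi_i$; by linearity $F(\image(\phi_1\otimes\cdots\otimes\phi_r))\subseteq\image\phi_i$. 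On the other hand, for any $v'_i\in V'_i$, the pure tensor
\[
z:=\phi'_1(v'_1)\otimes\cdots\otimes\phi'_{i-1}(v'_{i-1})\otimes\phi'_i(v'_i)\otimes\phi'_{i+1}(v'_{i+1})\otimes\cdots\otimes\phi'_r(v'_r)
\]
lies in $\image(\phi'_1\otimes\cdots\otimes\phi'_r)$, which by hypothesis equals $\image(\phi_1\otimes\cdots\otimes\phi_r)$, and satisfies $F(z)=\phi'_i(v'_i)$. Hence $\phi'_i(v'_i)\in\image\phi_i$, so $\image\phi'_i\subseteq\image\phi_i$. Swapping the roles of the primed and unprimed data yields the reverse inclusion, and thus the desired equality.

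The delicate point is arranging the auxiliary data: for the contraction argument to actually hit $\image\phi'_i$, one needs $\phi'_j(v'_j)$ to be unimodular in $W_j$ for every $j\neq i$. This is precisely what the reduction to a local ring buys us, since only there does the splitting of the sequence $0\to V'_j\to W_j\to U'_j\to 0$ supply a basis element of $\image\phi'_j$ extending to a basis of $W_j$, hence a functional evaluating to $1$ on it. The remainder of the argument is purely bookkeeping, so this unimodularity step is the sole substantive obstacle.
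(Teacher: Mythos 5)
Your proof is correct, and it takes a genuinely different route from the paper's. After the shared first step of localizing to reduce to a local ring, the paper proceeds by induction to the case $r=2$, chooses bases of $W_{1},W_{2}$ adapted to $V_{1},V_{2}$, and analyzes the change-of-basis matrix $A\otimes B$: its block form forces $\bar{A}\otimes\bar{B}$ to be invertible, whence $\bar{A}$ and $\bar{B}$ are invertible by Corollary \ref{CorTenInj} (the determinant identity for Kronecker products), and the vanishing of the off-block products $a_{ji}b_{k\ell}$ then kills the unwanted coefficients. Your contraction argument replaces all of this: pairing the big tensor product against functionals $f_{j}$ dual to unimodular elements of $\image\phi'_{j}$ sends $\image(\phi_{1}\otimes\dots\otimes\phi_{r})$ into $\image\phi_{i}$ while hitting every $\phi'_{i}(v'_{i})$ on the nose, so the inclusion $\image\phi'_{i}\subseteq\image\phi_{i}$ (and, by symmetry, equality) drops out with no matrices, no induction on $r$, and no appeal to Corollary \ref{CorTenInj}; indeed you never use that $\rank V_{i}=\rank V'_{i}$, only that the ranks are positive so that unimodular elements and dual functionals exist. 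What the paper's heavier setup buys is reusability: the same block-matrix analysis is the template for the exterior and symmetric power analogues (Lemmas \ref{LemAltSymInj1} and \ref{LemAltSymInj2}), where no single-factor contraction is available. One small caveat, shared with the paper: both arguments (and really the statement itself) tacitly require every $\rank V_{i}\geq 1$ --- if some $V_{k}=0$ the hypothesis becomes vacuous and the conclusion can fail for the other indices --- so your dismissal of the ``degenerate case'' is best read as the observation that the lemma is only meant for positive ranks, not as a genuine reduction.
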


\begin{proof}
By induction on $r$, we can assume $r=2$. Let us first assume that $R$ is local and therefore, all $V_{i}, V_{i}', W_{i}, U_{i}, U_{i}'$ are free. The two sequences in the lemma are split and so, we can complete a basis of $V_{i}$ to a basis of $W_{i}$ (here we identify $V_{i}$ and $V_{i}'$ with their image in $W_{i}$). Let $\{e_{1},e_{2},\dots,e_{n_{1}}\}$ and respectively $\{f_{1},f_{2},\dots,f_{n_{2}}\}$ be bases for $W_{1}$ and respectively $W_{2}$, with the first $m_{i}$ elements constituting a basis of $V_{i}$. Let $\epsilon_{i}=\sum_{j=1}^{n_{1}}a_{ji}e_{j}$ ($i=1,\dots,m_{1}$) and respectively $\delta_{\ell}=\sum_{k=1}^{n_{2}}b_{k\ell}f_{k}$ ($\ell=1,\dots,m_{2}$) be bases for $V_{1}'$ and respectively $V_{2}'$. If we show that for all $j>m_{1}$, all $k>m_{2}$ and all $i,\ell$ we have $a_{ji}=0=b_{k\ell}$, then we will have $V_{1}'\subseteq V_{1}$ and $V_{2}'\subseteq V_{2}$, and by symmetry also the opposite inclusions, which will finish the proof.\\

Let us write $A:=(a_{ji})\in\BM_{n_{1}\times m_{1}}(R)$ and $B:=(b_{k\ell})\in\BM_{n_{2}\times m_{2}}(R)$. Let us also write $\bar{A}$ and respectively $\bar{B}$ the square matrices obtained by deleting last rows of $A$ and respectively $B$. By assumption, the matrix $A\otimes B$ is of the following form:
\[\begin{pmatrix}
\bar{A}\otimes\bar{B}&\bf{0}\\\bf{0}&\bf{0}
\end{pmatrix}\]
We claim that the matrix $\bar{A}\otimes\bar{B}$ is invertible. Indeed, since  $\phi_{1}\otimes\dots\otimes\phi_{r}$ and $\phi_{1}'\otimes\dots\otimes\phi_{r}'$ are injective ($V_{i}, V_{i}'$ and $W_{i}$ are flat over $R$) and since they have the same image, matrix $\bar{A}\otimes\bar{B}$ is the invertible base change matrix from $\{e_{i}\otimes e_{j}\}$ to $\{\epsilon_{i}\otimes\epsilon_{j}\}$. It follows from Corollary \ref{CorTenInj} that $\bar{A}$ and $\bar{B}$ are invertible. It also follows that for all $j>m_{1}$ and all $i,k,\ell$, we have $a_{ji}b_{k\ell}=0$. So, we have $a_{ji}\bar{B}=0$. Since $\bar{B}$ is invertible, this implies that $a_{ji}=0$. Similarly, we have $b_{k\ell}=$ for all $k>m_{2}$ and all $\ell$. This is what we wanted and the proof is achieved in the case $R$ is a local ring.\\

Now assume that $R$ is an arbitrary ring. For any prime ideal $\Fp$, the sequences 
\[0\to V_{i,\Fp}\arrover{\phi_{i}} W_{i,\Fp}\to U_{i,\Fp}\to 0\]
\[0\to V'_{i,\Fp}\arrover{\phi_{i}'} W_{i,\Fp}\to U'_{i,\Fp}\to 0\] obtained by localization are exact. By the previous case, this means that the composition $V_{i,\Fp}\arrover{\phi_{i}} W_{i,\Fp}\to U_{i,\Fp}'$ is zero. Since this is true for all $\Fp$, we conclude that the composition $V_{i}\arrover{\phi_{i}}W_{i}\to U_{i}'$ is zero and so, $\phi_{i}$ factors through $\phi_{i}'$. The localization at $\Fp$ of the resulting morphism from $V_{i}$ to $V_{i}'$ is an isomorphism for all $\Fp$ and so this morphism is an isomorphism as well.
\end{proof}

\begin{lem}
\label{LemAltSymInj1}
Let $R$ be a ring and consider the following short exact sequences of finitely generated projective $R$-modules:

\[0\to V_{1}\arrover{\phi_{1}} W\to U_{1}\to 0\]
\[0\to V_{2}\arrover{\phi_{2}} W\to U_{2}\to 0\]
with $\rank V_{1}=\rank V_{2}=m$.\\

Assume that the image of $\ep^{r}\phi_{1}$ and $\ep^{r}\phi_{2}$ are equal as sub-$R$-modules of $\ep^{r}W$ (for some $1\leq r\leq m$). Then the image of $\phi_{1}$ and $\phi_{2}$ are equal as sub-$R$-modules of $W$.
\end{lem}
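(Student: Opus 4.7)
My plan is to mimic the strategy of Lemma \ref{LemTensInj}: localize to a local ring, set up bases adapted to $V_{1}$, and extract the conclusion from vanishing of certain $r \times r$ minors.

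First, equality of two finitely generated submodules can be checked after localizing at every prime, and localization commutes with exterior powers. So I may assume $R$ is local. Using the splitting of the first sequence, I pick a basis $\{e_{1}, \dots, e_{n}\}$ of $W$ whose first $m$ elements span $V_{1}$, and a basis $\{\epsilon_{1}, \dots, \epsilon_{m}\}$ of $V_{2}$. Writing $\epsilon_{i} = \sum_{j=1}^{n} a_{ji} e_{j}$ and $A = (a_{ji})$, decompose $A$ into a top $m \times m$ block $\bar{A}$ and a bottom $(n-m) \times m$ block $A'$. The goal becomes $A' = 0$.

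Next I translate the hypothesis. The column of $\ep^{r} A$ indexed by $I \subseteq \{1, \dots, m\}$ of size $r$ equals $\sum_{J} \det(A_{J,I}) \, e_{j_{1}} \wedge \dots \wedge e_{j_{r}}$, summed over $r$-subsets $J$ of $\{1,\dots,n\}$. Its containment in $\ep^{r} V_{1}$ forces $\det(A_{J, I}) = 0$ whenever $J \not\subseteq \{1, \dots, m\}$. The remaining ``good'' block $\ep^{r} \bar{A}$ effects the change of basis from $\{e_{I}\}_{I \subseteq \{1,\dots,m\}}$ to $\{\epsilon_{I}\}_{I \subseteq \{1,\dots,m\}}$ inside the free module $\ep^{r} V_{1}$, hence is invertible. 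By Corollary \ref{CorAltInj}, $\bar{A}$ itself is invertible, so its rows $\rho_{1}, \dots, \rho_{m}$ form a basis of $R^{m}$.

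Finally, I deduce $A' = 0$. Fix $j > m$ and any $(r-1)$-subset $J' = \{k_{1}, \dots, k_{r-1}\} \subset \{1, \dots, m\}$. The vanishing of $\det(A_{\{j\} \cup J', I})$ for all $r$-subsets $I \subseteq \{1, \dots, m\}$ amounts to the vanishing of every $r \times r$ minor of the $r \times m$ matrix with rows $\rho_{j}, \rho_{k_{1}}, \dots, \rho_{k_{r-1}}$, which is equivalent to $\rho_{j} \wedge \rho_{k_{1}} \wedge \dots \wedge \rho_{k_{r-1}} = 0$ in $\ep^{r} R^{m}$. Writing $\rho_{j} = \sum_{k} b_{k} \rho_{k}$ in the basis $\{\rho_{k}\}$ and using that the wedges $\rho_{k} \wedge \rho_{k_{1}} \wedge \dots \wedge \rho_{k_{r-1}}$ for $k \notin J'$ are part of the induced basis of $\ep^{r} R^{m}$, this forces $b_{k} = 0$ for every $k \notin J'$. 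Since $m \geq r$, for each prescribed $k_{0} \in \{1, \dots, m\}$ I may choose $J'$ with $k_{0} \notin J'$, so every $b_{k_{0}}$ vanishes and $\rho_{j} = 0$. Thus $A' = 0$, i.e.\ $V_{2} \subseteq V_{1}$; by symmetry $V_{1} = V_{2}$. The main subtlety is that a single $J'$ only kills the $b_{k}$ for $k \notin J'$; the flexibility to vary $J'$ (afforded by $m \geq r$) is essential for isolating every coordinate of $\rho_{j}$.
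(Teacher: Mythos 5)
Your proof is correct, and it shares its skeleton with the paper's: localize to a local ring, choose a basis of $W$ whose first $m$ vectors span $V_{1}$, encode $V_{2}$ by the matrix $A$ with top block $\bar{A}$, read off from the hypothesis both that $\det(A_{J,I})=0$ whenever $J\not\subseteq\{1,\dots,m\}$ and that $\ep^{r}\bar{A}$ (hence, by Corollary \ref{CorAltInj}, $\bar{A}$) is invertible, and reduce to showing the bottom rows of $A$ vanish. Where you genuinely diverge is the endgame. The paper bootstraps the vanishing of $r$-minors containing a bad row up to the vanishing of all $m$-minors containing the last row of the matrix obtained by appending a fixed bad row $A_{j}$ to $\bar{A}$ (an induction on the size of the minors via Laplace expansion), and then left-multiplies by $\diag(\bar{A}^{-1},1)$ so that these $m$-minors become, up to sign, the entries of $A_{j}$. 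You instead stay at the level of $r$-minors throughout: you read the vanishing of $\det(A_{\{j\}\cup J',I})$ over all column sets $I$ as the single equation $\rho_{j}\wedge\rho_{k_{1}}\wedge\dots\wedge\rho_{k_{r-1}}=0$ in $\ep^{r}R^{m}$, expand $\rho_{j}$ in the basis given by the rows of $\bar{A}$, and isolate each coefficient by varying $J'$, which is exactly where the hypothesis $r\le m$ enters. Your version buys a cleaner and shorter extraction step that avoids the paper's minor-size induction (which needs a little care about which row one expands along); the paper's version stays entirely in the language of determinants and row operations and needs no wedge-algebra bookkeeping. Both arguments are complete, and your closing remark correctly identifies the one point of subtlety, namely that a single $J'$ only controls the coefficients $b_{k}$ with $k\notin J'$.
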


\begin{proof}
The proof of this lemma is similar to that of Lemma \ref{LemTensInj}, but requires a slight modification. As in the proof of Lemma \ref{LemTensInj}, it is enough to consider the case where $R$ is local ring, and so, $W, V_{i}, U_{i}$ ($i=1,2$) are free. Let $\{e_{1}, e_{2},\dots,e_{n}\}$ be a basis of $W$ with the first $m$ elements forming a basis for $V_{1}$. Let $\epsilon_{i}=\sum_{j=1}^{n}a_{ji}e_{j}$ ($i=1,\dots, m$) be a basis for $V_{2}$. Again, we want to show that for all $j>m$ and all $i$ the coefficient $a_{ji}$ vanishes. Write $A:=(a_{ji})\in\BM_{n\times m}(R)$. Let $\bar{A}$ be the $m\times m$ matrix obtained by deleting the last $n-m$ rows of $A$. By assumption the matrix $\ep^{r}A$ is of the form:
\[\begin{pmatrix}\ep^{r}\bar{A}&\bf{0}\\\bf{0}&\bf{0}\end{pmatrix}\]
and the matrix $\ep^{r}\bar{A}$ is invertible. By Corollary \ref{CorAltInj}, $\bar{A}$ is invertible as well.\\

For all $(i_{1},\dots,i_{r})$ with $1\leq i_{1}<\dots<i_{r}\leq n$ and at least one $i_{k}>m$, the $(i_{1},\dots,i_{r})$-row of $\ep^{r}A$ is zero. This means that all $r$-minors of $A$, where we keep at least one of the last $n-m$ rows, are zero. This implies the same statement for all $r+1$-minors, where we keep at least one of the last $n-m$ rows. By induction, this means that all $m$-minors of $A$, where we keep at least one of the last $n-m$ rows are zero. We also know that $\bar{A}$ is invertible. Let us write $A_{i}$ for the $i^{\rm{th}}$-row of $A$. Fix a $j>m$ and write $\check{A}_{j}$ for the matrix 
\[\begin{pmatrix}\bar{A}\\A_{j}\end{pmatrix}\]
in other words, add row $A_{j}$ to the bottom of $\bar{A}$. Then all the $m$-minors of $\check{A}_{j}$ are zero if we keep the last row. Noting that the determinant is multilinear in rows, this will remain the case if we multiply $\check{A}_{j}$ with the matrix 
\[\begin{pmatrix}(\bar{A})^{-1}&\bf{0}\\\bf{0}&1\end{pmatrix}
\]
We have 
\[\begin{pmatrix}(\bar{A})^{-1}&\bf{0}\\\bf{0}&1\end{pmatrix}\begin{pmatrix}\bar{A}\\A_{j}\end{pmatrix}=\begin{pmatrix}{I}_{m}\\A_{j}\end{pmatrix}\]
here ${I}_{m}$ is the identity matrix of size $m$. This implies that $A_{j}$ is zero. This is what we wanted and the proof is achieved.
\end{proof}

\begin{lem}
\label{LemAltSymInj2}
Let $m$ and $r$ be natural numbers and let $R$ be a $\BZ[r^{m-2}]$-algebra. Consider the following short exact sequences of finitely generated projective $R$-modules:

\[0\to V_{1}\arrover{\phi_{1}} W\to U_{1}\to 0\]
\[0\to V_{2}\arrover{\phi_{2}} W\to U_{2}\to 0\]
with $\rank V_{1}=\rank V_{2}=m$.\\

Assume that the image of $\Sym^{r}\phi_{1}$ and $\Sym^{r}\phi_{2}$ are equal as sub-$R$-modules of $\Sym^{r}W$. Then the image of $\phi_{1}$ and $\phi_{2}$ are equal as sub-$R$-modules of $W$.
\end{lem}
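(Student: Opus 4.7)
The plan mirrors the proof of Lemma~\ref{LemAltSymInj1}, but needs a different combinatorial extraction because the entries of $\Sym^{r}A$ are permanent-like rather than minor-like, so there is no inductive-on-minors structure to exploit. The outline is: reduce to the local (free-module) case by the localization-at-primes argument from the last paragraph of Lemma~\ref{LemTensInj}; translate the image-equality hypothesis into a block structure of an appropriate matrix; extract a single linear identity from the lowest-order terms of a product expansion; and conclude.

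After the reduction I fix a basis $\{e_{1},\dots,e_{n}\}$ of $W$ whose first $m$ members span $V_{1}$, and write a basis of $V_{2}$ as $\epsilon_{k}=\sum_{j}a_{jk}e_{j}$. Let $A=(a_{jk})\in\BM_{n\times m}(R)$ with top $m\times m$ block $\bar{A}$ and bottom block $B=(b_{jk})_{j>m,\,k\leq m}$. The equality of the images of $\Sym^{r}\phi_{1}$ and $\Sym^{r}\phi_{2}$ in $\Sym^{r}W$ says exactly that every row of $\Sym^{r}A$ indexed by a degree-$r$ monomial involving some $e_{j}$ with $j>m$ vanishes, while the remaining top block $\Sym^{r}\bar{A}$ is an invertible change-of-basis matrix on $\Sym^{r}V_{1}$. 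Corollary~\ref{CorSymInj} then forces $\bar{A}$ invertible, and replacing $\epsilon_{k}$ by $\sum_{l}(\bar{A}^{-1})_{lk}\epsilon_{l}$ reduces us to $\bar{A}=I_{m}$, i.e.\ $\epsilon_{k}=e_{k}+\sum_{j>m}b_{jk}e_{j}$; the new goal is $B=0$.

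The core step is to expand, for each sorted multi-index $\underline{k}=(k_{1}\leq\dots\leq k_{r})$ in $\{1,\dots,m\}^{r}$,
\[
\epsilon_{k_{1}}\cdots\epsilon_{k_{r}}=\prod_{s=1}^{r}\Bigl(e_{k_{s}}+\sum_{j>m}b_{jk_{s}}e_{j}\Bigr),
\]
and to isolate the coefficient of a monomial of the form $M\cdot e_{j}$, where $M$ is a degree-$(r-1)$ monomial in $e_{1},\dots,e_{m}$ and $j>m$. Only those terms picking a single noise factor contribute, and summing them gives $\mu_{v}\cdot b_{j,v}$, where $v\in\{1,\dots,m\}$ is the unique value such that removing one copy of $v$ from $\underline{k}$ yields $M$, and $\mu_{v}$ is the multiplicity of $v$ in $\underline{k}$. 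By hypothesis each such coefficient vanishes. When $m\geq 2$ one picks $\underline{k}=(v,v',\dots,v')$ with $v'\neq v$ to force $\mu_{v}=1$ and hence $b_{j,v}=0$; when $m=1$ only $\underline{k}=(1,\dots,1)$ is available so $\mu_{v}=r$, and the standing hypothesis that $r^{m-2}$ is invertible in $R$ supplies the inverse of $r$ in this edge case. Either way $B=0$, so $V_{2}\subseteq V_{1}$, and swapping the roles of $V_{1}$ and $V_{2}$ yields $V_{1}=V_{2}$.

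The main obstacle I anticipate is keeping the extracted relation linear in $B$: the higher-order terms of the expansion mix the $b_{jk}$ nonlinearly and would be awkward over a non-reduced $R$. The preliminary normalization to $\bar{A}=I_{m}$ (obtained via Corollary~\ref{CorSymInj}, exactly as in the proof of Lemma~\ref{LemAltSymInj1}) is what isolates a clean linear coefficient at every ``mixed'' monomial and so makes the argument go through.
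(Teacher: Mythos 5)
Your proof is correct, and it follows the same overall strategy as the paper's: reduce to the local free case, observe that the hypothesis forces $\Sym^{r}A$ into block form with $\Sym^{r}\bar{A}$ invertible, deduce via Corollary~\ref{CorSymInj} that $\bar{A}$ is invertible, and then extract coefficients of mixed monomials $M\cdot e_{j}$ ($j>m$) from products of the $\epsilon_{k}$, with the same case split between $m\geq 2$ and $m=1$ (where invertibility of $r$ is needed because the only available multiplicity is $\mu_{v}=r$). The one genuine difference is your preliminary normalization $\bar{A}=I_{m}$, obtained by replacing the basis of $V_{2}$ by $\bar{A}^{-1}$ times itself. The paper does not normalize: it works with a general invertible $\bar{A}$, expands $\epsilon_{1}\epsilon_{2}^{r-1}$, identifies the resulting family of coefficient relations with the entries of $\Sym^{r-1}\bar{A}$ applied to a fixed vector $\nu$, and then needs a second application of Corollary~\ref{CorSymInj} (invertibility of $\Sym^{r-1}\bar{A}$) to conclude $\nu=0$. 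Your normalization collapses that linear system to the single scalar identity $\mu_{v}b_{jv}=0$, which is both shorter and less error-prone (the paper's multinomial bookkeeping in equations (\ref{EqCoeff1&2})--(\ref{S1&2}) is the most delicate computation in the argument); the cost is only the trivial observation that $B=0$ is equivalent to $B\bar{A}^{-1}=0$. Both arguments prove exactly the same statement, so nothing is lost in generality.
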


\begin{proof}
We proceed as in the proof of the previous lemma and keep the same notations (e.g., $\{e_{1},\dots,e_{n}\}$ is a basis of $W$, the first $m$ elements form a basis of $V_{1}$, $\bar{A}$ is the square $m\times m$-matrix obtained from $A$, etc.). Again, we have to show that for all $j>m$ and all $i$, the coefficient $a_{ji}$ is zero. By symmetry, we can assume that $j=m+1$ and $i=1$.\\

The condition $R$ be a $\BZ[r^{m-2}]$-algebra means 
that either $m\geq 2$ or that $r$ is invertible in $R$. Let us first assume that $m\geq 2$.\\

By assumption, we have

\begin{align}
\label{EqSymrA}
\Sym^{r}A=\begin{pmatrix}\Sym^{r}\bar{A}&\bf{0}\\\bf{0}&\bf{0}\end{pmatrix}
\end{align} 

and the matrix $\Sym^{r}\bar{A}$ is invertible, and so by Corollary \ref{CorSymInj}, $\bar{A}$ is invertible as well.\\

Given $r$ elements $w_{1},\dots,w_{r}\in W$, we write $w_{1}w_{2}\dots w_{r}$ for their ``product'' in $\Sym^{r}W$. Fix a vector $I=(i_{1},\dots,i_{m})$ of non-negative integers with $i_{1}+\dots+i_{m}=r-1$. The coefficient of the basis element $e_{1}^{i_{1}}\dots e_{m}^{i_{m}}e_{m+1}$ in the expansion of the element $$\epsilon_{1}\epsilon_{2}^{r-1}=(a_{1,1}e_{1}+\dots+a_{n,1}e_{n})(a_{1,2}e_{1}+\dots+a_{n,2}e_{n})^{r-1}\in \Sym^{r}W$$ is:

\begin{align}
\label{EqCoeff1&2}
a_{m+1,1}\binom{r-1}{i_{1},\dots,i_{m}}\prod_{k=1}^{m}a_{k,2}^{i_{k}} &+\nonumber \\ \sum_{j=1}^{m}a_{m+1,2}a_{j,1}\frac{\prod_{k=1}^{m}a_{k,2}^{i_{k}}}{a_{j,2}}\binom{r-1}{i_{1},\dots,i_{j-1},i_{j}-1,i_{j+1},\dots,i_{m},1}\nonumber&=\\
a_{m+1,1}\binom{r-1}{i_{1},\dots,i_{m}}\prod_{k=1}^{m}a_{k,2}^{i_{k}}+a_{m+1,2}(r-1)\sigma_{I}
\end{align}

where we denote by $\binom{M}{k_{1},\dots,k_{\ell}}$ the generalized binomial coefficient $\frac{M!}{k_{1}!\dots k_{\ell}!}$, with the convention that if one of $k_{j}$ is negative, then the coefficient $\binom{M}{k_{1},\dots,k_{\ell}}$ is zero, and where we denote $$\sigma_{I}:=\sum_{j=1}^{m}a_{j,1}\frac{\prod_{k=1}^{m}a_{k,2}^{i_{k}}}{a_{j,2}}\binom{r-2}{i_{1},\dots,i_{j-1},i_{j}-1,i_{j+1},\dots,i_{m}}$$

By (\ref{EqSymrA}), this coefficient is zero, i.e.,

\begin{align}
\label{EqCoeffZero}
a_{m+1,1}\binom{r-1}{i_{1},\dots,i_{m}}\prod_{k=1}^{m}a_{k,2}^{i_{k}}+a_{m+1,2}(r-1)\sigma_{I}=0
\end{align}

Now, let us look at the entries of the matrix $\Sym^{r-1}\bar{A}$ at row $I$ and columns $I_{1}:=(0,r-1,0,\dots,0)$ and $I_{2}:=(1,r-2,0,\dots,0)$, denoted respectively by $s_{1}$ and $s_{2}$. For $i=1,2$, $s_{i}$ is the coefficient of $e_{1}^{i_{1}}\dots e_{m}^{i_{m}}$ in the vector $$\Sym^{r-1}\bar{A}\,(e_{1}^{i-1}e_{2}^{r-i})\in\Sym^{r-1}V_{1}$$
We therefore have:

\begin{align}
\label{S1&2}
s_{1}&=\binom{r-1}{i_{1},\dots,i_{m}}\prod_{k=1}^{m}a_{k,2}^{i_{k}}\nonumber\\
s_{2}&=\sigma_{I}
\end{align}

Set $\nu\in\Sym^{r-1}V_{1}$ to be the vector whose component at row $I_{1}$ and respectively $I_{2}$ is $a_{m+1,1}$ and respectively $a_{m+1,2}(r-1)$, and zeros everywhere else. Note that $\nu$ is independent from $I$. It follows from (\ref{EqCoeffZero}) and (\ref{S1&2}) that $$\Sym^{r-1}\bar{A}\cdot \nu=0$$

But as $\bar{A}$ is invertible, $\Sym^{r-1}\bar{A}$ is invertible as well (see Corollary \ref{CorSymInj}) and so $\nu=0$, which implies that $a_{m+1,1}=0$, as desired.\\

Now, let us assume that $m=1$ and $r$ is invertible in $R$. To simplify the notation, let us write $\epsilon=a_{1}e_{1}+\dots a_{n}e_{n}$ for a basis of $V_{2}$. Then, by assumption, $\epsilon^{r}$ is a basis of $\Sym^{r}V_{1}$. In other words, $\epsilon^{r}$ is a unit times $e_{1}^{r}$. We have

\[\epsilon^{r}=\sum_{\substack{0\leq i_{1},\dots,i_{n}\\i_{1}+\dots+i_{n}=r}}\binom{r}{i_{1},\dots,i_{n}}a_{1}^{i_{1}}\dots a_{n}^{i_{n}}e_{1}^{i_{1}}\dots e_{n}^{i_{n}}=\]
\[a_{1}^{r}e_{1}^{r}+\sum_{i=2}^{n}ra_{1}^{r-1}a_{i}e_{1}^{r-1}e_{i}+\sum_{\substack{0\leq i_{1},\dots,i_{n}\\ i_{1}<r-1\\i_{1}+\dots+i_{n}=r}}\binom{r}{i_{1},\dots,i_{n}}a_{1}^{i_{1}}\dots a_{n}^{i_{n}}e_{1}^{i_{1}}\dots e_{n}^{i_{n}}\]

It follows that $a_{1}^{r}$ is a unit and for all $i=2,\dots,n$, the coefficient $ra_{1}^{r-1}a_{i}$ is zero. Since $r$ is a unit in $R$, we conclude that for all $i=2,\dots,n$ we have $a_{i}=0$ as desired. This finishes the proof.
\end{proof}

\begin{rem}
We will need the last three lemmas only in the case where $R$ is a local ring, however, since the proof for arbitrary ring did not need much more work and its statement might be interesting in itself, we decided to state it more generally.\xqed{\lozenge}
\end{rem}

\section{Main Theorem}

Let us now sheafify Lemmas \ref{LemTensInj}, \ref{LemAltSymInj1} and \ref{LemAltSymInj2} to get similar results for short exact sequences of vector bundles over schemes.

\begin{prop}
\label{PropTensInj}
Let $X$ be a scheme. Consider (for $i=1,\dots,r$) the following short exact sequences of finite locally free  $\CO_{X}$-modules:
\[0\to \CF_{i}\arrover{\phi_{i}} \CG_{i}\to \CH_{i}\to 0\]
\[0\to \CF'_{i}\arrover{\phi_{i}'} \CG_{i}\to \CH'_{i}\to 0\]where  $\rank(\CF_{i})=\rank (\CF_{i}')$. Then, we have a factorization 

\begin{align}
\label{FactPsi1}
\xymatrix{\CF_{1}\otimes_{\CO_{X}}\dots\otimes_{\CO_{X}}\CF_{r}\ar[rr]^{\phi_{1}\otimes\dots\otimes\phi_{r}}\ar@{-->}[dr]_{\Psi}^{\cong}&&\CG_{1}\otimes_{\CO_{X}}\dots\otimes_{\CO_{X}}\CG_{r}\\
&\CF_{1}'\otimes_{\CO_{X}}\dots\otimes_{\CO_{X}}\CF_{r}'\ar[ur]_{\phi_{1}'\otimes\dots\otimes\phi_{r}'}&}
\end{align}

where $\Psi$ is an isomorphism if and only if for all $i=1,\dots,r$ we have a factorization 

\begin{align}
\label{FactPsi2}
\xymatrix{\CF_{i}\ar[rr]^{\phi_{i}}\ar@{-->}[dr]_{\Psi_{i}}^{\cong}&&\CG_{i}\\
&\CF_{i}'\ar[ur]_{\phi_{i}'}&}
\end{align}

where $\Psi_{i}$ is an isomorphism.
\end{prop}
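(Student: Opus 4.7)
The ``if'' direction is immediate: given isomorphisms $\Psi_i$ fitting into (\ref{FactPsi2}), the tensor product $\Psi:=\Psi_1\otimes\dots\otimes\Psi_r$ is an isomorphism making (\ref{FactPsi1}) commute. The content is in the converse, and the plan is to reduce it to Lemma \ref{LemTensInj} by working on a trivializing affine cover, then glue.

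Assume $\Psi$ is an isomorphism. Since each $\phi_i'$ is the left map of a short exact sequence of locally free sheaves, it is injective, and by applying Corollary \ref{CorTenInj} over stalks (or on any affine open where the sheaves trivialize) the tensor product $\phi_1'\otimes\dots\otimes\phi_r'$ is injective as well. Combined with the factorization $\phi_1\otimes\dots\otimes\phi_r=(\phi_1'\otimes\dots\otimes\phi_r')\circ\Psi$ and the fact that $\Psi$ is an isomorphism, this yields
\[
\image(\phi_1\otimes\dots\otimes\phi_r)\;=\;\image(\phi_1'\otimes\dots\otimes\phi_r')
\]
as sub-$\CO_X$-modules of $\CG_1\otimes_{\CO_X}\dots\otimes_{\CO_X}\CG_r$. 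The equality $\image(\phi_i)=\image(\phi_i')$ in $\CG_i$ is a local statement, so I would pick an affine open cover $\{U_\alpha=\Spec R_\alpha\}$ of $X$ on which all the sheaves $\CF_i,\CF_i',\CG_i,\CH_i,\CH_i'$ trivialize. On each $U_\alpha$ the hypothesis of Lemma \ref{LemTensInj} is satisfied, and the lemma delivers $\image(\phi_i)|_{U_\alpha}=\image(\phi_i')|_{U_\alpha}$. These local equalities patch to the corresponding equality of subsheaves on $X$.

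Now extract $\Psi_i$: since $\phi_i'$ is injective and its image in $\CG_i$ contains $\image(\phi_i)$, there is a unique $\CO_X$-linear map $\Psi_i:\CF_i\to\CF_i'$ with $\phi_i'\circ\Psi_i=\phi_i$. Interchanging the roles of $\phi_i$ and $\phi_i'$ produces $\Psi_i':\CF_i'\to\CF_i$ with $\phi_i\circ\Psi_i'=\phi_i'$. Then $\phi_i\circ(\Psi_i'\circ\Psi_i)=\phi_i=\phi_i\circ\id_{\CF_i}$, and the injectivity of $\phi_i$ forces $\Psi_i'\circ\Psi_i=\id_{\CF_i}$; the symmetric argument gives $\Psi_i\circ\Psi_i'=\id_{\CF_i'}$, so each $\Psi_i$ is an isomorphism.

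The one real obstacle is the equality of images, and this is precisely what Lemma \ref{LemTensInj} is designed to supply locally; everything else is formal sheaf-theoretic bookkeeping (uniqueness of factorizations through monomorphisms, followed by a two-sided inverse argument).
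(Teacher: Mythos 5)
Your proposal is correct and follows essentially the same route as the paper: both directions reduce the hard implication to Lemma \ref{LemTensInj} applied locally (you on a trivializing affine cover, the paper at stalks), then obtain $\Psi_i$ from the resulting equality of images and verify it is an isomorphism. The only cosmetic differences are that the paper phrases the factorization as the vanishing of the composite $\CF_i\to\CG_i\to\CH_i'$ and checks that $\Psi_i$ is an isomorphism on stalks, whereas you use the universal property of the monomorphism $\phi_i'$ and a two-sided inverse; both are sound.
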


\begin{proof}
If we have isomorphisms $\Psi_{i}$ as in (\ref{FactPsi2}) then the tensor product $\Psi_{1}\otimes\dots\otimes\Psi_{r}$ is an isomorphism satisfying (\ref{FactPsi1}). So, let's assume that we have an isomorphism as in (\ref{FactPsi1}).  By induction on $r$, we can assume that $r=2$. For any $x\in X$, the sequences 
\[0\to \CF_{i,x}\arrover{\phi_{i,x}} \CG_{i,x}\to \CH_{i,x}\to 0\]
\[0\to \CF'_{i,x}\arrover{\phi_{i,x}'} \CG_{i,x}\to \CH'_{i,x}\to 0\] are exact sequences of finite free $\CO_{X,x}$-modules. By previous lemma, this means that the composition $\CF_{i,x}\arrover{\phi_{i,x}} \CG_{i,x}\to \CH_{i,x}'$ is zero. Since this is true for all $x\in X$, we conclude that the composition $\CF_{i}\arrover{\phi_{i}}\CG_{i}\to \CH_{i}'$ is zero and so, $\phi_{i}$ factors through $\phi_{i}'$. The stalk at $x$ of the resulting morphism $\Psi_{i}:\CF_{i}\to\CF_{i}'$ is an isomorphism for all $x$ and so $\Psi_{i}$ is an isomorphism as well, making the diagram (\ref{FactPsi2}) commutative.
\end{proof}

\begin{prop}
\label{PropAltSymInj1}
Let $X$ be a scheme. Consider the following short exact sequences of finite locally free  $\CO_{X}$-modules:
\[0\to \CF_{1}\arrover{\phi_{1}} \CG\to \CH_{1}\to 0\]
\[0\to \CF_{2}\arrover{\phi_{2}} \CG\to \CH_{2}\to 0\]where  $\rank(\CF_{1})=\rank (\CF_{2})$.\\

We have a factorization (for some $1\leq r\leq m$):

\begin{align}
\label{FactPsi1}
\xymatrix{\ep^{r}_{\CO_{X}}\CF_{1}\ar[rr]^{\ep^{r}\phi_{1}}\ar@{-->}[dr]_{\Psi}^{\cong}&&\ep^{r}_{\CO_{X}}\CG\\
&\ep^{r}_{\CO_{X}}\CF_{2}\ar[ur]_{\ep^{r}\phi_{2}}&}
\end{align}

where $\Psi$ is an isomorphism if and only if we have a factorization 

\begin{align}
\label{FactPsi2}
\xymatrix{\CF_{1}\ar[rr]^{\phi_{1}}\ar@{-->}[dr]_{\tilde{\Psi}}^{\cong}&&\CG\\
&\CF_{2}\ar[ur]_{\phi_{2}}&}
\end{align}

where $\tilde{\Psi}$ is an isomorphism.
\end{prop}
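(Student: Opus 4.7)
The plan is to mimic the proof of Proposition \ref{PropTensInj} verbatim, simply substituting Lemma \ref{LemAltSymInj1} for Lemma \ref{LemTensInj}. Since both the hypothesis and the conclusion of the proposition are local on $X$ (being a short exact sequence, being an isomorphism, and factorization of morphisms all localize to stalks), the entire argument reduces, at each point $x\in X$, to a statement about finite free modules over the local ring $\CO_{X,x}$, which is exactly what Lemma \ref{LemAltSymInj1} delivers.

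The easy direction is the ``if'': given an isomorphism $\tilde\Psi$ filling (\ref{FactPsi2}), the functoriality of $\ep^{r}_{\CO_{X}}$ produces $\Psi:=\ep^{r}\tilde\Psi$, which is an isomorphism (for instance by Corollary \ref{CorAltInj} applied stalkwise) and fits into (\ref{FactPsi1}).

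For the ``only if'' direction, I would fix $x\in X$ and pass to stalks to obtain two short exact sequences
\[0\to \CF_{1,x}\arrover{\phi_{1,x}}\CG_{x}\to\CH_{1,x}\to 0,\qquad 0\to \CF_{2,x}\arrover{\phi_{2,x}}\CG_{x}\to\CH_{2,x}\to 0\]
of finite free $\CO_{X,x}$-modules with $\rank \CF_{1,x}=\rank \CF_{2,x}=m$. The existence of the isomorphism $\Psi$ means that the images of $\ep^{r}\phi_{1,x}$ and $\ep^{r}\phi_{2,x}$ coincide as submodules of $\ep^{r}\CG_{x}$, so Lemma \ref{LemAltSymInj1} gives $\image(\phi_{1,x})=\image(\phi_{2,x})$ inside $\CG_{x}$. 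In particular the composition $\CF_{1,x}\to\CG_{x}\to\CH_{2,x}$ vanishes. As this holds at every point, the global composition $\CF_{1}\to\CG\to\CH_{2}$ vanishes, and consequently $\phi_{1}$ factors as $\phi_{2}\circ\tilde\Psi$ for a unique morphism $\tilde\Psi:\CF_{1}\to\CF_{2}$.

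It remains to verify that $\tilde\Psi$ is an isomorphism, which can again be checked stalkwise: at each $x$, $\tilde\Psi_{x}$ is the unique map sending a basis of $\image(\phi_{1,x})$ to itself via the two identifications with $\CF_{1,x}$ and $\CF_{2,x}$, hence an isomorphism between finite free modules of the same rank. There is essentially no obstacle here: all the serious multilinear-algebra content has already been absorbed into Lemma \ref{LemAltSymInj1}, and the sheafification is purely formal, so the proof should be a clean parallel of the one just given for Proposition \ref{PropTensInj}.
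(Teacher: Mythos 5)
Your proof is correct and follows exactly the route the paper intends: the paper omits the argument, stating only that it is ``similar to the proof of Proposition \ref{PropTensInj}, based on Lemma \ref{LemAltSymInj1},'' and your stalkwise reduction, factorization through the vanishing of $\CF_{1}\to\CG\to\CH_{2}$, and pointwise verification that $\tilde\Psi$ is an isomorphism is precisely that argument transposed from the tensor-product case to the exterior-power case. Nothing to add.
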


\begin{proof}
The proof is similar to the proof of Proposition \ref{PropTensInj}, based on Lemma \ref{LemAltSymInj1}, and is therefore omitted.
\end{proof}

\begin{prop}
\label{PropAltSymInj2}
Let $m$ and $r$ be natural numbers. Let $X$ be a $\BZ[r^{m-2}]$-scheme. Consider the following short exact sequences of finite locally free  $\CO_{X}$-modules:
\[0\to \CF_{1}\arrover{\phi_{1}} \CG\to \CH_{1}\to 0\]
\[0\to \CF_{2}\arrover{\phi_{2}} \CG\to \CH_{2}\to 0\]where  $\rank(\CF_{1})=m=\rank (\CF_{2})$.\\

We have a factorization

\begin{align}
\label{FactPsi3}
\xymatrix{\Sym^{r}_{\CO_{X}}\CF_{1}\ar[rr]^{\Sym^{r}\phi_{1}}\ar@{-->}[dr]_{\Psi}^{\cong}&&\Sym^{r}_{\CO_{X}}\CG\\
&\Sym^{r}_{\CO_{X}}\CF_{2}\ar[ur]_{\Sym^{r}\phi_{2}}&}
\end{align}

where $\Psi$ is an isomorphism if and only if we have a factorization 

\begin{align}
\label{FactPsi4}
\xymatrix{\CF_{1}\ar[rr]^{\phi_{1}}\ar@{-->}[dr]_{\tilde{\Psi}}^{\cong}&&\CG\\
&\CF_{2}\ar[ur]_{\phi_{2}}&}
\end{align}

where $\tilde{\Psi}$ is an isomorphism.
\end{prop}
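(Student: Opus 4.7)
The plan is to mirror the structure used for Propositions \ref{PropTensInj} and \ref{PropAltSymInj1}: the implication $(\ref{FactPsi4}) \Rightarrow (\ref{FactPsi3})$ is formal, and the converse is proved by reducing to stalks and invoking Lemma \ref{LemAltSymInj2}.

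For the easy direction, given an isomorphism $\tilde{\Psi}:\CF_{1}\to\CF_{2}$ with $\phi_{2}\circ\tilde{\Psi}=\phi_{1}$, the morphism $\Psi:=\Sym^{r}\tilde{\Psi}$ is again an isomorphism (the symmetric power functor preserves isomorphisms) and satisfies $\Sym^{r}\phi_{2}\circ\Psi=\Sym^{r}\phi_{1}$, so diagram (\ref{FactPsi3}) commutes.

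For the converse, suppose $\Psi$ exists and is an isomorphism. Since the hypothesis that $X$ is a $\BZ[r^{m-2}]$-scheme passes to every stalk $\CO_{X,x}$ (each is a $\BZ[r^{m-2}]$-algebra), and since the short exact sequences of finite locally free $\CO_X$-modules become short exact sequences of finite free $\CO_{X,x}$-modules after taking stalks, Lemma \ref{LemAltSymInj2} applies pointwise. It yields that the images of $\phi_{1,x}$ and $\phi_{2,x}$ in $\CG_{x}$ coincide for every $x\in X$. Consequently the composition $\CF_{1}\arrover{\phi_{1}}\CG\to\CH_{2}$ vanishes on every stalk, hence vanishes globally. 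Because $\phi_{2}$ is the kernel of $\CG\to\CH_{2}$, the morphism $\phi_{1}$ factors uniquely as $\phi_{2}\circ\tilde{\Psi}$ for some $\tilde{\Psi}:\CF_{1}\to\CF_{2}$, producing the diagram (\ref{FactPsi4}). Finally, at each stalk the equality of images together with the injectivity of $\phi_{1,x}$ and $\phi_{2,x}$ (both finite free modules of the same rank $m$) forces $\tilde{\Psi}_{x}$ to be an isomorphism, so $\tilde{\Psi}$ is an isomorphism of $\CO_{X}$-modules.

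There is no genuine obstacle here beyond verifying that the local-to-global passage works as in the two preceding propositions; the entire arithmetic difficulty (the need to invert $r$ when $m=1$, encoded through the $\BZ[r^{m-2}]$-algebra hypothesis) has already been absorbed into Lemma \ref{LemAltSymInj2}. For this reason one can simply write, as the authors do for Proposition \ref{PropAltSymInj1}, that the proof is identical to that of Proposition \ref{PropTensInj} with Lemma \ref{LemAltSymInj2} in place of Lemma \ref{LemTensInj}, and omit the details.
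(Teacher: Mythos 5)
Your proof is correct and follows exactly the route the paper intends: the paper itself says the proof is "similar to the proof of Proposition \ref{PropTensInj}, based on Lemma \ref{LemAltSymInj2}," and your write-up is a faithful instantiation of that argument (easy direction by functoriality of $\Sym^{r}$, converse by passing to stalks, applying Lemma \ref{LemAltSymInj2}, and gluing the factorization). Nothing is missing.
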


\begin{proof}
The proof is similar to the proof of Proposition \ref{PropTensInj}, based on Lemma \ref{LemAltSymInj2}, and is therefore omitted.
\end{proof}

\begin{notation}
Let $X$ be a scheme. Let $\CV$ be a finite locally free $\CO_{X}$-module. We denote by $\BG r(\CV,d)$ the Grassmannian (flag) variety of short exact sequences of locally free sheaves \[0\to \CF\to\CV\to\CG\to 0\] where $\CF$ is of rank $d$. In other words, $\BG r(\CV,d)$ represents the functor from $X$-schemes to sets, where any $X$-scheme $S$ is mapped to the set of short exact sequences 
\[0\to \CF\to\CV_{S}\to\CG\to 0\]of finite locally free $\CO_{S}$-modules with $\CF$ of rank $d$. Two such sequences 
\[0\to \CF\to\CV_{S}\to\CG\to 0\]
and \[0\to \CF'\to\CV_{S}\to\CG'\to 0\]
are identified, if there is a commutative diagram:
\[\xymatrix{0\ar[r]& \CF\ar[r]\ar[d]_{\cong}&\CV_{S}\ar[r]\ar@{=}[d]&\CG\ar[r]\ar[d]& 0\\
0\ar[r]& \CF'\ar[r]&\CV_{S}\ar[r]&\CG'\ar[r]& 0}\]Note that the morphism $\CF\to\CF'$ should be an isomorphism and the morphism $\CV_{S}\to\CV_{S}$ is the identity. The morphism $\CG\to\CG'$ will then automatically be an isomorphism. We could have required the morphism $\CG\to\CG'$ be an isomorphism, which would then imply that $\CF\to\CF'$ is an isomorphism as well (for details about Grassmannians see \cite[\href{https://stacks.math.columbia.edu/tag/089R}{Section 089R}]{stacks-project} or \cite{MR3075000} Ch.I, \S9.7).
\end{notation}

\begin{cons}
Let $X$ be a scheme. Let $\CV_{i}$ (for $i=1,\dots,r$) be a finite locally free $\CO_{X}$-module and fix natural numbers $m_{i}\leq \rank(\CV_{i})$. We want to construct a morphism of $X$-schemes $$\FT:\BG r(\CV_{1},m_{1})\times_{X}\dots\times_{X}\BG r(\CV_{r},m_{r})\to \BG r(\CV_{1}\otimes_{\CO_{X}}\dots\otimes_{\CO_{X}}\CV_{r},m_{1}\cdots m_{r})$$It is enough to construct, functorially in $X$-schemes $S$, a map of sets \[\FT:\BG r(\CV_{1},m_{1})(S)\times\dots\times\BG r(\CV_{r},m_{r})(S)\to \BG r(\CV_{1}\otimes_{\CO_{X}}\dots\otimes_{\CO_{X}}\CV_{r},m_{1}\cdots m_{r})(S)\]We are given $r$ short exact sequences \[(\xi_{i})\qquad 0\to\CF_{i}\to\CV_{i,S}\to\CG_{i}\to0\] of finite locally free $\CO_{S}$-modules with $\CF_{i}$ of rank $m_{i}$. Define $\FT(\xi_{1},\dots,\xi_{r})$ to be the following short exact sequence:

\[0\to \CF_{1}\otimes_{\CO_{S}}\dots\otimes_{\CO_{S}}\CF_{r}\to \CV_{S,1}\otimes_{\CO_{S}}\dots\otimes_{\CO_{S}}\CV_{S,r}\to\CC\to0\]where $\CC$ is the cokernel of $\CF_{1}\otimes_{\CO_{S}}\dots\otimes_{\CO_{S}}\CF_{r}\to \CV_{S,1}\otimes_{\CO_{S}}\dots\otimes_{\CO_{S}}\CV_{S,r}$. It is not hard to see that $\CC$ is indeed locally free. For, if we choose an affine covering of $S$, then the sequences $\xi_{i}$ split and so, over this affine covering, $\CC$ is a direct sum of tensor products of finite locally free sheaves. This shows that $\CC$ is also locally free. The construction of $\FT$ is functorial is $S$ and so defines the desired morphism.\xqed{\blacktriangledown}
\end{cons}

\begin{cons}
In a similar fashion, if $X$ is a scheme, $\CV$ is a finite locally free $\CO_{X}$-module and $m\leq\rank(\CV)$ is a natural number, then we can construct two morphisms of $X$-schemes:
\[\FA_{r}:\BG r(\CV,m)\to \BG r\Big(\ep^{r}_{\CO_{X}}\CV,\binom{m}{r}\Big)\] (for $1\leq r\leq m$) and respectively 
\[\FS_{r}:\BG r(\CV,m)\to \BG r\Big(\Sym^{r}_{\CO_{X}}\CV,\binom{m+r-1}{r}\Big)\]
(for $r\geq 1$) using the exterior and respectively the symmetric power operators.\\

Composing $\FT$ with the diagonal morphism $$\Delta_{r}:\BG r(\CV,m)\into \BG r(\CV,m)\times_{X}\dots\times_{X}\BG r(\CV,m)$$ we also obtain a third morphism of $X$-schemes:
\[\FT_{r}:\BG r(\CV,m)\to \BG r\big(\CV^{\otimes r},m^{r}\big)\]
(for $r\geq 1$).\xqed{\blacktriangledown}
\end{cons}

\begin{prop}
Let $X$ be a scheme. Let $\CV_{i}$ (for $i=1,\dots,r$) and $\CV$ be finite locally free $\CO_{X}$-modules and fix natural numbers $m_{i}\leq \rank(\CV_{i})$ and $m\leq\rank(\CV)$. Then, for any $X$-scheme $S$, maps $\FT(S), \FT_{r}(S)$ and $\FA_{r}(S)$ are injective, in other words, $\FT, \FT_{r}$ and $\FA$ are monomorphisms in the category of schemes.
\end{prop}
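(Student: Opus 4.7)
The plan is to reduce the three injectivity claims to Propositions \ref{PropTensInj} and \ref{PropAltSymInj1}, by unpacking what it means for two $S$-points of a Grassmannian to coincide. Recall that a morphism of schemes is a monomorphism if and only if the induced map on $S$-points is injective for every scheme $S$; thus it suffices to verify injectivity of $\FT(S)$, $\FT_r(S)$ and $\FA_r(S)$ separately for each $X$-scheme $S$. The key observation is that two short exact sequences $0 \to \CF \to \CV_S \to \CG \to 0$ and $0 \to \CF' \to \CV_S \to \CG' \to 0$ define the same $S$-point of $\BG r(\CV,m)$ precisely when there is an isomorphism $\CF \xrightarrow{\sim} \CF'$ compatible with the inclusions into $\CV_S$, which is exactly the data of the lower triangle in (\ref{FactPsi2}).

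First I would handle $\FT$. Suppose $(\xi_1,\dots,\xi_r)$ and $(\xi_1',\dots,\xi_r')$ are two $S$-points with $\FT(S)(\xi_1,\dots,\xi_r) = \FT(S)(\xi_1',\dots,\xi_r')$. By the observation above, this equality produces an isomorphism
\[\Psi \colon \CF_1 \otimes_{\CO_S} \cdots \otimes_{\CO_S} \CF_r \xrightarrow{\sim} \CF_1' \otimes_{\CO_S} \cdots \otimes_{\CO_S} \CF_r'\]
making the triangle (\ref{FactPsi1}) of Proposition \ref{PropTensInj} commute. That proposition then supplies compatible isomorphisms $\Psi_i \colon \CF_i \xrightarrow{\sim} \CF_i'$ fitting into (\ref{FactPsi2}), which is exactly the statement that $\xi_i = \xi_i'$ in $\BG r(\CV_i,m_i)(S)$ for each $i$. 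Hence $\FT(S)$ is injective.

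The argument for $\FA_r$ is word-for-word the same, with Proposition \ref{PropAltSymInj1} replacing Proposition \ref{PropTensInj}: coincidence of two $S$-points under $\FA_r(S)$ produces an isomorphism $\ep^r_{\CO_S}\CF_1 \xrightarrow{\sim} \ep^r_{\CO_S}\CF_2$ compatible with the inclusions into $\ep^r_{\CO_S}\CV_S$, which by that proposition descends to an isomorphism $\CF_1 \xrightarrow{\sim} \CF_2$ compatible with the two inclusions into $\CV_S$.

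Finally, by construction $\FT_r = \FT \circ \Delta_r$, where $\Delta_r$ denotes the $r$-fold diagonal $\BG r(\CV,m) \to \BG r(\CV,m)\times_X \cdots \times_X \BG r(\CV,m)$. Since the diagonal of any morphism is a monomorphism and $\FT$ is a monomorphism by the above, so is the composition $\FT_r$. (One could equally well apply Proposition \ref{PropTensInj} directly with all the $\CV_i$ taken equal to $\CV$.) I do not anticipate any genuine obstacle in this proposition, since all the substantive multilinear-algebra content has already been extracted in Section 3; the present result is essentially a translation of those statements into the language of the Grassmannian functor.
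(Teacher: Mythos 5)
Your proposal is correct and follows essentially the same route as the paper: injectivity of $\FT(S)$ and $\FA_r(S)$ is read off from Propositions \ref{PropTensInj} and \ref{PropAltSymInj1} after identifying equality of $S$-points with the existence of the isomorphism $\Psi$ in the relevant triangle, and $\FT_r$ is handled via $\FT_r=\FT\circ\Delta_r$. The only cosmetic difference is that the paper justifies $\Delta_r$ being a monomorphism by invoking separatedness of Grassmannians (so that $\Delta_r$ is a closed immersion), whereas you use the more elementary fact that any diagonal is a monomorphism; both are fine.
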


\begin{proof}
The statements for $\FT$ and  $\FA_{r}$ are direct consequences of Propositions \ref{PropTensInj} and \ref{PropAltSymInj1}. The statement for $\FT_{r}$ follows from the fact that Grassmannians are separated and therefore the diagonal morphism $\Delta_{r}$ is a closed immersion (immersions are mononorphism \cite[\href{https://stacks.math.columbia.edu/tag/01L7}{Lemma 01L7}]{stacks-project}). 
\end{proof}

\begin{prop}
Let $m$ and $r$ be natural numbers and let $X$ be a $\BZ[r^{m-2}]$-scheme. Let $\CV$ be a finite locally free $\CO_{X}$-module and assume $m\leq\rank(\CV)$. Then, for any $X$-scheme $S$, $\FS_{r}(S)$ is injective, in other words, $\FS$ is a monomorphism in the category of schemes.
\end{prop}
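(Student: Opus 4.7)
The plan is to follow exactly the template used for $\FT$ and $\FA_r$ in the previous proposition, with Proposition \ref{PropAltSymInj1} replaced by Proposition \ref{PropAltSymInj2}; the hypothesis that $X$ is a $\BZ[r^{m-2}]$-scheme is precisely what is needed to invoke the latter.

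First I would fix an $X$-scheme $S$ and two points of $\BG r(\CV,m)(S)$, represented by short exact sequences
\[ (\xi_{1})\quad 0\to\CF_{1}\arrover{\phi_{1}}\CV_{S}\to\CG_{1}\to 0,\qquad (\xi_{2})\quad 0\to\CF_{2}\arrover{\phi_{2}}\CV_{S}\to\CG_{2}\to 0,\]
and assume $\FS_{r}(S)(\xi_{1})=\FS_{r}(S)(\xi_{2})$. By the way two points of a Grassmannian are identified (see the notation block preceding the constructions), this equality means that the two subsheaves $\Sym^{r}\phi_{1}(\Sym^{r}\CF_{1})$ and $\Sym^{r}\phi_{2}(\Sym^{r}\CF_{2})$ of $\Sym^{r}\CV_{S}$ coincide; equivalently, there is a (necessarily unique) isomorphism $\Psi\colon\Sym^{r}_{\CO_{S}}\CF_{1}\isoto\Sym^{r}_{\CO_{S}}\CF_{2}$ fitting into a commutative triangle of the form (\ref{FactPsi3}) over $S$.

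Next, since $S$ is an $X$-scheme and $X$ is a $\BZ[r^{m-2}]$-scheme, $S$ is itself a $\BZ[r^{m-2}]$-scheme, so the hypothesis of Proposition \ref{PropAltSymInj2} is satisfied with base $S$ and the two sequences $\xi_{1},\xi_{2}$ (noting $\rank\CF_{1}=m=\rank\CF_{2}$). Applying that proposition yields an isomorphism $\tilde{\Psi}\colon\CF_{1}\isoto\CF_{2}$ of $\CO_{S}$-modules making the triangle (\ref{FactPsi4}) commute, i.e. $\phi_{2}\circ\tilde{\Psi}=\phi_{1}$. This exactly says that $\xi_{1}$ and $\xi_{2}$ represent the same short exact sequence in the sense of the identification in the definition of $\BG r(\CV,m)$, hence they define the same point of $\BG r(\CV,m)(S)$. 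So $\FS_{r}(S)$ is injective, and as $S$ was an arbitrary $X$-scheme, $\FS_{r}$ is a monomorphism in the category of schemes.

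There is essentially no obstacle here beyond invoking Proposition \ref{PropAltSymInj2}; the only subtlety worth flagging is the hypothesis chasing, namely that the base-change hypothesis $\BZ[r^{m-2}]$ passes from $X$ to $S$ and that the ranks of $\CF_{1}$ and $\CF_{2}$ agree (both equal $m$, by definition of $\BG r(\CV,m)$). Both are immediate, so the proof reduces to one sentence invoking Proposition \ref{PropAltSymInj2}, exactly parallel to the proof of the preceding proposition.
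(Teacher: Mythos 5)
Your proof is correct and is exactly the argument the paper intends: the paper's own proof is the single line ``This is a direct consequence of Proposition \ref{PropAltSymInj2},'' and your write-up simply spells out that reduction (including the minor points that $S$ inherits the $\BZ[r^{m-2}]$-structure from $X$ and that both subsheaves have rank $m$). Nothing further is needed.
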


\begin{proof}
This is a direct consequence of Proposition \ref{PropAltSymInj2}.
\end{proof}



\begin{thm}
\label{MainThm}
Let $X$ be a scheme. Let $\CV_{i}$ (for $i=1,\dots,r$) and $\CV$ be finite locally free $\CO_{X}$-module and fix natural numbers $m_{i}\leq \rank(\CV_{i})$ and $m\leq \rank(\CV)$. The $X$-morphisms  

\begin{align*}
\FT:\BG r(\CV_{1},m_{1})\times_{X}\dots\times_{X}\BG r(\CV_{r},m_{r})&\to \BG r(\CV_{1}\otimes_{\CO_{X}}\dots\otimes_{\CO_{X}}\CV_{r},m_{1}\cdots m_{r})\\
\FT_{r}:\BG r(\CV,m)&\to \BG r\big(\CV^{\otimes r},m^{r}\big)\\
\FA_{r}:\BG r(\CV,m)&\to \BG r\Big(\ep^{r}_{\CO_{X}}\CV,\binom{m}{r}\Big)\\
\end{align*}

are closed immersions. If $m>1$ or $r$ is invertible on $X$, then 
\[\FS_{r}:\BG r(\CV,m)\to \BG r\Big(\Sym^{r}_{\CO_{X}}\CV,\binom{m+r-1}{r}\Big)\]
is a closed immersion as well.
\end{thm}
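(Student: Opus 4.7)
The plan is to upgrade the monomorphism results already proved (the two propositions immediately preceding the theorem) to closed immersions via a standard ``properness'' argument. The conceptual input is the classical fact that a \emph{proper monomorphism of schemes is a closed immersion} (see e.g.\ Stacks Project Tag~\texttt{04XV} combined with Zariski's Main Theorem: a proper monomorphism is quasi-finite, hence finite, and a finite monomorphism is a closed immersion). Since we already know that $\FT, \FT_{r}, \FA_{r}$ are monomorphisms without any hypothesis, and that $\FS_{r}$ is a monomorphism when $m>1$ or $r$ is invertible on $X$, it suffices to verify properness of each of the four morphisms.

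First I would recall that any Grassmannian $\BG r(\CV, d)$ is projective over $X$: the Pl\"ucker morphism $\FA_{d}$ (whose target is a projective bundle on $X$) realizes it as a closed subscheme of $\BP(\ep^{d}\CV)$. In particular the targets of all four morphisms are separated over $X$, and the source of $\FT_{r}, \FA_{r}, \FS_{r}$ (a single Grassmannian) is proper over $X$. For $\FT$, the source is a fibered product of projective $X$-schemes, which is again projective over $X$ by iterated application of the Segre embedding, and in particular proper over $X$.

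Next I would invoke the standard lemma (Stacks Project Tag~\texttt{01W6}): if $f:Y\to Z$ is an $X$-morphism with $Y$ proper over $X$ and $Z$ separated over $X$, then $f$ is proper. This applies verbatim to each of $\FT, \FT_{r}, \FA_{r}, \FS_{r}$ and makes them proper. Combined with the monomorphism statements and the principle ``proper monomorphism $=$ closed immersion'' quoted above, this gives the theorem.

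The main obstacle has in fact already been overcome: essentially all of the content lies in the multilinear algebra lemmas \ref{LemTensInj}, \ref{LemAltSymInj1}, \ref{LemAltSymInj2} and their sheafified versions \ref{PropTensInj}, \ref{PropAltSymInj1}, \ref{PropAltSymInj2}, which supply injectivity on $S$-points. This closing step is purely formal; the only care needed is to cite the correct general statements and to observe that the hypothesis $m>1$ or $r$ invertible on $X$ is required only to secure the monomorphism property of $\FS_{r}$, not its properness.
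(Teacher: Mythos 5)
Your proposal is correct and follows essentially the same route as the paper: both reduce the theorem to the already-established monomorphism propositions plus properness of each morphism (source proper over $X$, target separated over $X$), and then conclude via ``a proper monomorphism is a closed immersion.'' The only cosmetic difference is that the paper cites EGA IV, 8.11.5, which assumes finite presentation and therefore includes a short extra verification of that hypothesis, whereas you invoke the unconditional Stacks Project version; either works.
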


\begin{proof}
We will only prove that $\FT$ is a closed immersion. The other cases are similar. In order to simplify the notations, let us write $Y:=\BG r(\CV_{1},m_{1})\times_{X}\dots\times_{X}\BG r(\CV_{r},m_{r})$ and $Z:=\BG r(\CV_{1}\otimes_{\CO_{X}}\dots\otimes_{\CO_{X}}\CV_{r},m_{1}\cdots m_{r})$. By \cite{MR3075000} Proposition 9.8.4, Grassmannians are closed subschemes of projective spaces over $X$ (via Pl\"ucker embedding) and therefore are proper over $X$. It follows that $Y$ and $Z$ are proper over $X$. Since $\FT$ is a morphism between proper schemes, it is proper as well (cf. \cite{MR0163909}, \S5, Corollaire 5.4.3.).\\

If we can show that $\FT$ is of finite presentation (cf. \cite{MR0173675} D\'efinition 1.6.1. or \cite[\href{https://stacks.math.columbia.edu/tag/02FV}{Lemma 02FV}]{stacks-project}) then, since $\FT$ is proper and is a monomorphism by previous proposition, it follows from \cite{MR0217086} Proposition 8.11.5. that it is a closed immersion.\\

Since $\CV_{i}$ are finite locally free, by \cite{MR3075000} Proposition 9.8.4., $Y$ and $Z$ embed as a closed subscheme with a coherent ideal sheaf into a product of projective schemes over $X$. Since projective schemes are of finite presentation, it follows that $Y$ and $Z$ are of finite presentation over $X$. Since $Z$ is proper and therefore quasi-separated over $X$, it follows from \cite{MR0173675} Proposition 1.6.2. (v) that $\FT$ is of finite presentation.
\end{proof}

\begin{rem}
For the symmetric power to be a closed immersion, the condition that $m$ is at least $2$ or $r$ is invertible on $X$ is actually necessary (unless the rank of the ambient space $\CV$ is $1$, in which case, we have the identity of the scheme $X$!). In order to see this, assume that $m=1$ and that $r$ is not a unit of $\Gamma(X,\CO)$. Then, there is a local Noetherian ring $R$ of characteristic $p>0$ with $p | r$, an element $\alpha\in R\setminus\{0\}$ with $\alpha^{2}=0$, and a morphism $\Spec(R)\to X$. We are going to show that the map $\FS_{r}(\Spec(R))$ is not injective, and so, $\FS_{r}$ cannot be a closed immersion.\\

Let $W$ be the free $R$-module of rank say $n$ associated with $\CV$. Choose a basis $e_{1},\dots,e_{n}$ of $W$ and let $V_{1}$ be the submodule generated by $e_{1}$. Then $V_{1}\subset W$ defines a point on $\BG r(W,1)$ on $R$. Let $V_{2}$ be the submodule generated by $e_{1}+\alpha e_{2}$. It is straightforward to see that $V_{2}\subset W$ defines another point of $\BG r(W,1)$ over $R$ that is distinct from $V_{1}\subset W$. However, we have 
\[(e_{1}+\alpha e_{2})^{r}=\big((e_{1}+\alpha e_{2})^{p}\big)^{r/p}=(e_{1}^{p}+\alpha^{p}e_{2}^{p})^{r/p}=e_{1}^{r}\] and so $\Sym^{r}V_{2}=\Sym^{r}V_{1}$.
\xqed{\lozenge}
\end{rem}

\begin{rem}
Note that the embedding $\FT$ generalizes the Segre embedding (cf. \cite{MR0163909}, \S4.3)
\[\BP(\CE)\times_{X}\BP(\CF)\into\BP(\CE\otimes_{\CO_{X}}\CF)\]where $\CE$ and $\CF$ are finite locally free $\CO_{X}$-modules and the embedding $\FA_{r}$ generalizes the Pl\"ucker embedding (cf. \cite{MR3075000}, \S9.8)
\[\BG r(\CV,m)\into \BP(\ep^{m}_{\CO_{X}}\CV)\] \xqed{\lozenge}
\end{rem}

\begin{rem}
It is possible to generalize these results to more general flag varieties. These general results can be obtained by induction from what we already have. There are many ways of constructing flags from a given flag and using various tensor constructions. A general statement would be rather nasty and we will therefore avoid it.\xqed{\lozenge}
\end{rem}

\textbf{Acknowledgments.} I would like to thank Arash Rastegar and Simon H\"aberli for helpful conversations.
\newpage

\phantomsection
\addcontentsline{toc}{section}{References}

\footnotesize{

\bibliographystyle{acm}}

\end{document}